\newtheorem{thm}{Theorem}[section]
\newtheorem{cor}[thm]{Corollary}
\newtheorem{pro}[thm]{Proposition}
\newcommand{\Section}[2]{\setcounter{equation}{0}
	\allowdisplaybreaks
	\section[#1]{#2}}
\def\n{\nabla}
\def\f#1#2{\frac{#1}{#2}}
\def\mc#1{\mathcal{#1}}
\def\td{\tilde}
\def\a{\alpha}
\def\p#1{\partial #1}
\def\de{\delta}
\def\G{\Gamma}
\def\la{\lambda}
\def\La{\Lambda}
\def\Om{\Omega}
\def\R{\Bbb{R}}
\def\lan{\langle}
\def\ran{\rangle}
\def\ra{\rightarrow}
\begin{document}
	\title{A note on the uniqueness of minimal maps into $\R^n$ via singular values}
	\author{Minghao Li, Ling Yang, Taiyang Zhu}
	\date{ }


    \renewcommand{\thefootnote}{}
    \footnotetext{\textit{2020 Mathematics Subject Classification.} 49Q05, 53A07, 53C42.}
    \footnotetext{\textit{Email addresses}: 19110840006@fudan.edu.cn (Minghao Li), yanglingfd@fudan.edu.cn (Ling Yang).}

	\maketitle
	\begin{abstract}
        In this note, we derive a uniqueness theorem for minimal graphs of general codimension under certain restrictions closed related to the convexity (not strict convexity) of the area functional with respect to singular values, improving the result in \cite{L-O-T}. The crucial step  of the proof is to show the local linearity of the singular value vectors along the geodesic homotopy of two given minimal maps.

         \end{abstract}
	
	\renewcommand{\proofname}{\it Proof.}
	
	
	\Section{Introduction}{Introduction}

A smooth map $f$ from an $m$-dimensional Riemannian manifold $M$ into an $n$-dimensional Riemannian manifold $N$ is called a {\it minimal map}, whenever the graph of $f$ is a minimal submanifold in $M\times N$, i.e. the product of $M$ and $N$ (see \cite{S}). For these maps, the {\it Dirichlet problem} is a central topic:
{\it Given a bounded domain $D\subset M$ and a map $\phi:\p D\ra N$, it asks how many minimal maps $f:D\ra N$ exist, so that
$f|_{\p D}=\phi$.}

When $N=\R$, it is well-known that the solution to the Dirichlet problem is unique, and the graph of this solution is area-minimizing.
This conclusion can be easily derived from the convexity of the area functional (see \cite{G} Chap 12). However, this beautiful result cannot be generalized
to the cases of $\dim N\geq 2$. Lawson-Osserman \cite{L-O} constructed 3 distinct minimal maps $f_1,f_2,f_3$ from the unit disk $\Bbb{D}\subset \R^2$ into
$\R^2$ sharing the same boundary data, and the graph
of $f_3$ is an unstable minimal surface. Afterwards, several $\R^n$-valued functions on $S^{m-1}$ (the boundary of $\Bbb{D}^m\subset \R^m$) were constructed in \cite{X-Y-Z}, taking each of which as a boundary condition, there exist infinitely solutions to the Dirichlet problem. It is nature to ask which kind of additional conditions ensure the minimal map to be unique and stable (i.e. the corresponding graph is a stable minimal submanifold).

This problem was studied in \cite{L-W1,L-W2,L-T,L-O-T} from the viewpoint of singular values. A nonnegative number $\mu$ is called a {\it singular value} of $df$
at $x\in M$ whenever $\mu^2$ is a critical value of $\lan df(v),df(v)\ran$ with $v$ an arbitrary unit vector in $T_x M$. Thereby, the area of
the graph of $f$ can be expressed as
\begin{equation}
A=\int_D \prod_{i=1}^m (1+\la_i^2)^{\f{1}{2}}dv_M,
\end{equation}
where $\la_1\geq \cdots\geq \la_m$ are singular values of $df$ and $dv_M$ is the volume form of $M$. As shown in \cite{L-T}, $(\la_1,\cdots,\la_m) \mapsto \prod_{i=1}^m (1+\la_i^2)^{\f{1}{2}}$ is a strictly convex (or convex) function on $\mc{M}$ (or $\overline{\mc{M}}$), where $\mc{M}$ consists of all vectors $(x_1,\cdots,x_m)$
in $\R^m$ satisfying $x_ix_j<1$ for each $1\leq i<j\leq m$ and
\begin{equation}
\prod_{i=1}^m (1-x_i^2)+\sum_{i=1}^m (1-x_1^2)\cdots(1-x_{i-1}^2)x_i^2(1-x_{i+1}^2)\cdots(1-x_m^2)>0
\end{equation}
and $\overline{\mc{M}}$ is the closure of $\mc{M}$. Moreover, the second variation formula in terms of the singular values of $df$
derived in \cite{L-W2} implies the following criteria for stability and uniqueness of minimal graphs:

\begin{thm}(\cite{L-T})
Let $f:M\ra N$ be a minimal map with $N$ having non-positive sectional curvature, then the graph of $f$ is stable (or weakly stable) whenever
 $(\la_1,\cdots,\la_m)\in \mc{M}$ (or $\overline{\mc{M}}$) everywhere on $M$, where $(\la_1,\cdots,\la_m)$ is the singular value vector
 of $df$ at $x\in M$.
\end{thm}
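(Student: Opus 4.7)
The plan is to establish non-negativity of the second variation of area at the graph $\Si=\mathrm{graph}(f)\subset M\times N$ for every compactly supported normal variation, which is by definition the criterion for (weak) stability. I would diagonalize $df$ pointwise: at each $x\in M$, choose orthonormal frames $\{e_i\}\subset T_xM$ and $\{\ep_\a\}\subset T_{f(x)}N$ with $df(e_i)=\la_i\ep_i$. In the adapted frame of $\Si$ produced by $\{e_i,\ep_\a\}$, one obtains explicit expressions for the induced metric, the normal bundle, and the second fundamental form in terms of the singular values and their derivatives, and the area density becomes exactly $F(\la_1,\cdots,\la_m)=\prod_i(1+\la_i^2)^{1/2}$.

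Next, I would invoke the second variation formula of \cite{L-W2}. For a normal variation $V$, $\de^2 A(V)$ decomposes into a kinetic-plus-extrinsic piece (integrand $|\nabla^\perp V|^2-|A\cdot V|^2$) and an ambient curvature piece. Because $M\times N$ is a Riemannian product and $N$ has non-positive sectional curvature, the ambient piece contributes with a favourable sign and can be discarded. It then suffices to show that the pointwise integrand $|\nabla^\perp V|^2-|A\cdot V|^2$ is $\geq 0$ (respectively strictly positive for $V\not\equiv 0$).

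The heart of the argument is to re-organize this integrand, in the adapted frame, as $\Hess F$ at $(\la_1,\cdots,\la_m)$ acting on the infinitesimal variation of singular values induced by $V$, up to terms that vanish by the minimality of $f$. Once this rewriting is achieved, the hypothesis $(\la_1,\cdots,\la_m)\in\mc{M}$ (resp.\ $\overline{\mc{M}}$) translates, via the characterization already recorded in the excerpt, into positive-definiteness (resp.\ positive-semidefiniteness) of $\Hess F$, and the desired stability (resp.\ weak stability) follows upon integration.

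The main obstacle is this last rewriting: one must carefully identify, among the components of $\nabla^\perp V$ and $A\cdot V$ in the adapted frame, those combinations that correspond to the first-order change of singular values, and dispose of the remaining cross-terms using the minimality equations for $f$. The diagonality of $df$ in the adapted frame and the product structure of $M\times N$ are precisely what make this algebraic matching feasible; for variations $V$ whose normal components are not purely graphical, one additionally relies on the non-positive sectional curvature of $N$ absorbed in the ambient piece of the previous step.
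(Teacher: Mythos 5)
The overall strategy you outline---compute the second variation, split off the ambient curvature term and discard it using $K_N\leq 0$, and then relate the remaining quadratic form to the convexity of $F(\la_1,\ldots,\la_m)=\prod_i(1+\la_i^2)^{1/2}$---is indeed the broad outline of \cite{L-T}, which is what Section 2 of this paper records. Your treatment of the curvature piece $(V)$ and of the $\n_V V$ (``acceleration'') piece $(IV)$, which you implicitly remove by restricting to normal variations or geodesic homotopies, is fine. The problem is the central rewriting step.

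Your claim that the integrand $|\n^\perp V|^2-|A\cdot V|^2$ can be reorganised as $\Hess F$ evaluated on the infinitesimal variation of singular values, ``up to terms that vanish by the minimality of $f$,'' is not correct, and the minimality equations cannot repair it. In the adapted singular-value frame, $\n V$ has diagonal components $p_{ii}=\lan\n_{df(a_i)}V,b_i\ran$ and off-diagonal components $p_{ij}$ ($i\neq j\leq r$) and $p_{i\a}$ ($\a>r$). Only the diagonal part records the first-order change of singular values, and it is exactly the diagonal part that produces the quadratic form $\Hess F(p_{11},\ldots,p_{rr})$, namely the term $(I)$ in (\ref{I}). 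The off-diagonal components rotate the singular-vector decomposition rather than deform $\la$, and they give rise to the terms $(II)$ and $(III)$ in (\ref{II})--(\ref{III}). These are genuinely nonzero quadratic forms in $\n V$; minimality of $f$ kills only the term $(IV)$ (which is the first variation in the direction $\n_V V$, hence zero at a critical point), not $(II)$ or $(III)$.

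In particular $(II)$ is a sum over $i<j$ of indefinite $2\times 2$ quadratic forms with matrix $\begin{pmatrix}1&-\la_i\la_j\\-\la_i\la_j&1\end{pmatrix}$ acting on $(p_{ij},p_{ji})$, and its non-negativity is exactly the condition $\la_i\la_j\leq 1$ of (\ref{cond1}). This inequality is one of the two defining conditions of $\ol{\mc{M}}$ and must be invoked directly for $(II)$, \emph{separately} from the positive semi-definiteness of $\Hess F$ which governs $(I)$. (It happens that $\la_i\la_j\leq 1$ is also the $2\times 2$ principal minor condition on $\Hess F$, so the same inequality appears in two distinct roles.) Without an argument that $(II)\geq 0$ and $(III)\geq 0$, the proof is incomplete; your plan to dispose of the cross-terms using the minimality equations would fail, and you should instead bound them directly from the singular-value hypotheses, which is what (\ref{I})--(\ref{cond2}) do.
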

	
\begin{thm}(\cite{L-O-T})\label{unique1}
Let $f_0,f_1:D\subset M\ra \R^n$ be both minimal maps, such that $f_0|_{\p D}=f_1|_{\p D}$. If the singular value vectors of both $f_0$ and
$f_1$ lies in a symmetric convex subset of $\mc{M}$, then $f_0=f_1$.
\end{thm}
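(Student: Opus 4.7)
The plan is to form the linear homotopy $f_t := (1-t)f_0 + t f_1$ for $t \in [0,1]$, which is the geodesic homotopy in the flat target $\R^n$ alluded to in the abstract. Note that $df_t = (1-t)df_0 + t\,df_1$ is affine in $t$ and that $f_t|_{\p D}$ is independent of $t$.

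The crux would be to show that the area $A(t) := \int_D \sqrt{\det(I + df_t^T df_t)}\, dv_M$ is a convex function of $t \in [0,1]$. Combining the convexity of $G(\lambda) := \prod_{i=1}^m (1+\lambda_i^2)^{1/2}$ on $\overline{\mc{M}}$ with the classical correspondence that lifts a suitably symmetric convex function of singular values to a convex function on the matrix space, the integrand $F(B) := G(\sigma(B))$, where $\sigma(B)$ denotes the singular value vector of $B$, becomes a convex function on the set $L := \{B : \sigma(B)\in K\}$, where $K \subset \mc{M}$ is the symmetric convex subset provided by the hypothesis. Since $K$ is symmetric and convex, $L$ is itself a convex set of matrices; hence $df_t(x) \in L$ for every $x \in D$ and every $t \in [0,1]$, and $A(t)$ is convex on $[0,1]$.

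Next I would invoke minimality: because $v := f_1 - f_0$ vanishes on $\p D$, the first variation of $A$ at $f_0$ along $v$ vanishes, giving $A'(0) = 0$, and analogously $A'(1) = 0$. A convex function on $[0,1]$ with vanishing derivative at both endpoints is identically constant, so $A(t) \equiv A(0)$. Pointwise convexity of the integrand then forces the chain
\[
F\bigl((1-t)df_0(x) + t\,df_1(x)\bigr) \;\leq\; G\bigl((1-t)\sigma(df_0(x)) + t\sigma(df_1(x))\bigr) \;\leq\; (1-t)G(\sigma(df_0(x))) + tG(\sigma(df_1(x)))
\]
to collapse to equality for a.e.\ $x \in D$ and every $t \in [0,1]$.

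The main obstacle I foresee is extracting $df_0 \equiv df_1$ from these equalities. The \emph{strict} convexity of $G$ on the open set $\mc{M}$, together with $\sigma(df_t(x)) \in K \subset \mc{M}$, forces $\sigma(df_t(x))$ to be constant in $t$; in particular $\sigma(df_0) = \sigma(df_1)$ pointwise. Combined with the equality case of the singular-value convexity inequality this should force $df_0(x) = df_1(x)$, after which the boundary condition upgrades equality of differentials to $f_0 \equiv f_1$ on $D$. I expect this final matrix-level equality-case analysis, which is the technical kernel behind the ``local linearity of the singular value vectors'' mentioned in the abstract, to be the most delicate step.
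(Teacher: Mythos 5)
Your broad strategy is the same as the paper's sketch for this cited theorem: take the linear (geodesic) homotopy $f_t = (1-t)f_0 + tf_1$, use minimality of $f_0,f_1$ to get $A'(0)=A'(1)=0$, argue $A$ is convex along the homotopy, conclude $A$ is constant, and extract $df_0=df_1$ from the equality case of convexity. However, there is a concrete gap in the middle step.

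The claim that $L := \{B : \sigma(B)\in K\}$ is a convex set of matrices, for an arbitrary permutation-symmetric convex $K\subset\mc{M}$, is \emph{false}. Lidskii-type singular value inequalities only give the one-sided \emph{weak majorization} $\sigma\bigl((1-t)A+tB\bigr)\preceq_m (1-t)\sigma(A)+t\sigma(B)$; to deduce that the left-hand side stays in $K$ you would additionally need $K$ to be closed under weak majorization from above, which a symmetric convex set need not be. A simple counterexample with $m=n=2$: take $K=\{(a,a):a\in[1/2,9/10]\}\subset\mc{M}$, $A=\tfrac12 I$, $B=-\tfrac9{10}I$. Then $\sigma(A),\sigma(B)\in K$ but $\sigma\bigl(\tfrac12(A+B)\bigr)=(1/5,1/5)\notin K$, so $L$ is not convex. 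Consequently you cannot conclude $df_t(x)\in L$, and the pointwise inequality chain you write down is not justified as stated. This is precisely the subtlety that the paper's Propositions 3.1 and 3.2 are designed to address: one does \emph{not} show $\sigma(df_t)\in\mc{C}$, only that $\sigma(df_t)\preceq_m \mu(t):=(1-t)\sigma(df_0)+t\sigma(df_1)$, and then uses the special analytic structure of $\overline{\mc{M}}$ (monotonicity and convexity of the function $H(x_2,\dots,x_m)=\sum_{i\ge2}(1-x_i^2)^{-1}$, Mirsky's description of the weak-majorization hull) to push $\sigma(df_t)$ into $\overline{\mc{M}}$ (into $\mc{M}$ when $\mc{C}\subset\mc{M}$). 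The convexity of $A(t)$ is then obtained from the second variation formula, $A''(t)\geq 0$, rather than from a pointwise matrix-level convexity of the integrand.

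Two further remarks. First, your final step — extracting $df_0=df_1$ from the equality case — is left vague. It does close in the strictly convex case: once $\sigma(df_0)=\sigma(df_1)$ and $\sigma(df_t)$ is forced constant, comparing the Frobenius norm $\|df_t\|_F^2=\sum_i\sigma_i(df_t)^2$ with its manifestly quadratic expansion in $t$ gives $\|df_1-df_0\|_F^2=0$. The paper instead obtains $\nabla_{df_t(a_i)}V=0$ from vanishing of the terms $(I),(II),(III)$ in the second variation, which is the route that generalizes to the boundary case $\overline{\mc{M}}$. Second, the von Neumann/Lewis lifting of a convex absolutely symmetric gauge to a convex matrix function is applicable in principle, but $G(\lambda)=\prod_i(1+\lambda_i^2)^{1/2}$ is convex only on $\overline{\mc{M}}$, which is not convex in $\R^m$; so invoking that machinery requires care about domains that your write-up glosses over. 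You have identified the right skeleton, but the step you flag as "delicate" is less problematic than the one you treat as obvious.
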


To prove Theorem \ref{unique1}, it is natural to consider the geodesic homotopy $\{f_t:t\in [0,1]\}$ of $f_0$ and $f_1$. The additional
conditions 'symmetric' and 'convex' ensure the singular value vectors of $f_t$ still lie in $\mc{M}$, then the area functional is {\it strictly
convex} along the homotopy and $f_0=f_1$ follows from the second variation formula (see (\ref{a22})-(\ref{V})). However, this technique cannot work when $(\la_1,\cdots,\la_m)\in \overline{\mc{M}}$,
which only ensures the {\it convexity} of the area functional. In the present paper, we overcome the above difficulty by studying the local
properties of the singular value vector function $t\mapsto \la(t):=(\la_1(t),\cdots,\la_m(t))$ when it takes value in $\p \mc{M}$ (i.e. the boundary
of $\mc{M}$). By applying majorization technology in convex optimisation, we establish the following confined property of $\la(t)$:
{\it For any interval $I=[t_1,t_2]\subset [0,1]$, $\la(t_1),\la(t_2)\in \overline{\mc{M}}$ implies $\la(t)\in \overline{\mc{M}}$ for all $t\in I$.} Moreover,
\begin{itemize}
\item The existence of $t_0\in (t_1,t_2)$ such that $\la(t_0)\in \p \mc{M}$ and $\la_1(t_0)>1$ forces $\la(t)$ to be a linear function
on $I$.
\item If $\la_i(t)\equiv 1$ on $I$ for each $i=1,\cdots,l$ and $\la_{l+1}(t)<1$, then $\n_{df_t(v)}V=0$, where $v$ is an arbitrary unit vector
in $T_x M$ satisfying $\lan df_t(v),df_t(v)\ran=1$ and $V:=df_1(x)-df_0(x)$ denotes the variation field.
\end{itemize}
Afterwards, in conjunction of the second variation formula, we use induction method to prove the main theorem as follows:
\begin{thm}
Let $\mc{C}$ be a symmetric convex subset of $\overline{\mc{M}}$. If $f_0,f_1:D\subset M\ra \R^n$ are both minimal maps, such that $f_0|_{\p D}=f_1|_{\p D}$, and the singular value vectors of both $f_0$ and
$f_1$ lie in $\mc{C}$, then $f_0=f_1$.
\end{thm}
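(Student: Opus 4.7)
The plan is to study the geodesic homotopy $f_t(x) := (1-t)f_0(x) + tf_1(x)$ for $t \in [0,1]$, which makes sense because $\R^n$ is flat and geodesics are straight lines. Write $V := f_1 - f_0$ for the variation field and $A(t)$ for the area of the graph of $f_t$. Since $\mc{C}$ is symmetric and convex, the majorization-based confined property described in the introduction implies that the singular value vector $\la(t, x)$ of $df_t$ stays in $\overline{\mc{M}}$ for every $(t, x) \in [0,1] \times D$. The integrand $\prod_{i=1}^m (1+\la_i^2)^{1/2}$ is therefore a convex function of $t$ for each fixed $x$, and consequently $A(t)$ is a convex function on $[0,1]$.

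Minimality of $f_0$ and $f_1$ together with $f_0|_{\p D} = f_1|_{\p D}$ forces $A'(0) = A'(1) = 0$, so convexity collapses $A$ to a constant on $[0,1]$ and $A''(t) \equiv 0$ for all $t$. The second variation formula then demands that its pointwise-nonnegative integrand vanish identically on $[0,1] \times D$. On the open stratum where $\la(t,x)$ lies in the interior $\mc{M}$, strict convexity makes the integrand positive-definite in $\n V$ and directly kills the relevant derivatives of $V$; the real work lies in analyzing the boundary stratum $\p \mc{M}$.

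This is where the two structural facts previewed in the introduction enter. Where some $\la_i(t_0, x) > 1$ with $\la(t_0, x) \in \p \mc{M}$, the first fact forces $t \mapsto \la(t, x)$ to be affine on a neighborhood interval in $t$, which severely constrains how $df_t(x)$ can depend on $t$. Where instead the top $l$ singular values are pinned at $1$ with $\la_{l+1} < 1$, the second fact supplies the differential identity $\n_{df_t(v)} V = 0$ for every $v$ satisfying $|df_t(v)| = 1$, so that $V$ is parallel along a specific $l$-dimensional image subspace of $df_t$.

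To finish, I would run an induction on $l$, the number of singular values equal to $1$. The base case $l = 0$ corresponds to $\la(t,x) \in \mc{M}$ strictly and is handled by the strict-convexity discussion above. The inductive step combines the affine behavior from the first fact with the parallelism from the second fact, together with the Dirichlet boundary condition $V|_{\p D} = 0$, to propagate $V \equiv 0$ across $D$. The principal obstacle I expect is the stratification of $D \times [0,1]$ according to the rank of $df_t$ and the number of unit singular values: one has to glue the vanishing of $V$ across transitions between strata without losing regularity. The linearity statement is precisely what enables this gluing, since it prevents oscillation of the singular-value pattern in $t$ and thereby reduces the problem to a finite piecewise analysis on each fiber $\{x\} \times [0,1]$.
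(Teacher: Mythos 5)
Your overall strategy matches the paper's: geodesic homotopy $f_t=(1-t)f_0+tf_1$, confinement of $\la(t)$ to $\overline{\mc{M}}$, convexity of $A(t)$ combined with $A'(0)=A'(1)=0$ giving $A''\equiv 0$, and an induction driven by the two structural facts about $\la(t)$ on the boundary stratum. But there are two concrete gaps in the induction as you set it up. First, the base case $l=0$ is \emph{not} equivalent to $\la(t,x)\in\mc{M}$: a point with $\la_1>1$, $\la_2<1$ and $\la\in\p\mc{M}$ also has no unit singular values, so your strict-convexity base case silently omits it. The paper isolates this case (its $\La_2\setminus\overline{\La}_1$ step), uses the affine-in-$t$ property of $\la(t)$ (Proposition 3.2), and exploits the strict convexity of the boundary-defining expression $\f{1}{1-\la_1^2}+\sum_{i\geq 2}\f{1}{1-\la_i^2}$ to force $\la(t_1)=\la(t_2)$, so that Proposition 3.1 yields $\n_{df_t(a_i)}V=0$.

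Second, in the step where $\la_1=\cdots=\la_k=1$ and $\la_{k+1}<1$, the parallelism fact only gives $p_{i\a}=0$ for $i\le k$; it does not by itself control the derivatives in the directions $a_i$ with $i>k$. The paper closes this by going back to $(I)=(II)=(III)=0$ from $A''\equiv 0$: once $p_{i\a}=0$ for $i\le k$ is substituted in, the remaining quadratic forms are positive-definite in $p_{i\a}$, $i>k$, because $\la_i<1$ and $\la_i\la_j<1$ there, so these too must vanish. Finally, the ``gluing across strata'' obstacle you flag is resolved in the paper by the nested open sets $\La_1\subset\La_2\subset\cdots\subset\La_{m+1}=(0,1)$ together with a continuity argument to pass to closures; this replaces the vaguer fiberwise piecewise analysis you propose. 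With these two additions your outline coincides with the paper's proof.
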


There are some intuitive examples of symmetric convex subsets of $\mc{M}$ given in \cite{L-O-T}, the closure of each of which corresponds
to an improvement of the existing uniqueness result:
\begin{cor}
    Assume $f_0,f_1:D\subset M\ra \R^n$ are both minimal maps with the same boundary data, then $f_0=f_1$ if either of the following occurs:
    \begin{itemize}
        \item $f_0$ and $f_1$ are both length non-increasing, i.e. all singular values of them are no more than $1$;
        \item the singular values of $f_0$ and $f_1$ satisfy $\sum_{i=1}^m\lambda_i\leq 2$;
        \item the singular values of $f_0$ and $f_1$ satisfy $\sum_{i=1}^m\lambda_i^2\leq 2$;
        \item the singular values of $f_0$ and $f_1$ satisfy $\sum_{i=1}^m(1+\lambda_i^2)^{\frac12}\leq 2$.
    \end{itemize}
\end{cor}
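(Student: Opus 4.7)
The plan is to derive all four statements of the corollary as immediate consequences of the Main Theorem. For each bullet, let $\mc{C}$ be the closed subset of $\R_{\geq 0}^m$ consisting of ordered vectors $(\la_1,\ldots,\la_m)$ that satisfy the displayed inequality; it then suffices to verify that $\mc{C}$ is symmetric, convex, and contained in $\ol{\mc{M}}$, after which the Main Theorem immediately forces $f_0=f_1$.

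Symmetry is automatic because each defining inequality is permutation-invariant in $(\la_1,\ldots,\la_m)$. Convexity is equally direct: the first bullet carves out a box in $\R_{\geq 0}^m$; the second is a half-space bounded by the linear function $\sum\la_i$; and the third and fourth are sub-level sets of the convex functions $\sum\la_i^2$ and $\sum(1+\la_i^2)^{1/2}$, respectively (each summand is convex in a single $\la_i$ and convexity passes to finite sums).

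The only point that truly requires attention is the containment $\mc{C}\subset\ol{\mc{M}}$. As flagged in the introduction, the strict/open versions of each of these four sets have already been exhibited in \cite{L-O-T} as symmetric convex subsets of $\mc{M}$, so their closures automatically land in $\ol{\mc{M}}$. For a self-contained verification I would argue case-by-case: in the first bullet $\la_i\leq 1$ renders every $(1-\la_k^2)$ non-negative, so the pair-wise bound $\la_i\la_j\leq 1$ and the polynomial inequality coming from (1.2) (with $>$ relaxed to $\geq$) are term-by-term non-negative; in the other three bullets the stated global bound combined with non-negativity of the remaining $\la_k$'s yields $\la_i+\la_j\leq 2$, whence $\la_i\la_j\leq 1$ by AM-GM, and the remaining polynomial condition then follows from the verification already performed in \cite{L-O-T}.

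The real substance of the corollary is therefore borne by the Main Theorem itself; the passage from it to the four listed situations is bookkeeping of the kind just outlined. The genuine obstacle lies one level up, in the Main Theorem, whose proof must handle the boundary stratum $\p\mc{M}$ where the area functional is only convex (not strictly convex) along the geodesic homotopy $f_t$, using the confined property of $\la(t)$, the dichotomy (linearity of $\la(t)$ when $\la_1>1$ on $\p\mc{M}$ versus $\n_{df_t(v)}V=0$ when a block of singular values is locked at $1$), and the induction on the size of that block sketched in the introduction.
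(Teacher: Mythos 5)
Your proposal follows the same route as the paper: each condition defines a symmetric convex set that is the closure of one of the subsets of $\mc{M}$ already exhibited in \cite{L-O-T}, so the Main Theorem applies directly. The paper's own treatment of the corollary is equally brief, and your added checks of symmetry, convexity, and the pairwise bound $\la_i\la_j\leq 1$ are a mild elaboration rather than a different argument.
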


\Section{The second variation formula for the volumes of higher-codimensional graphs}{The second variation formula for the volumes of higher-codimensional graphs}\label{S-G}

Let $(M,g_M)$ and $(N,g_N)$ be Riemannian manifolds of dimension $m$ and $n$, respectively. An arbitrary smooth map $f$ from $M$ into $N$
induces an embedding $X:M\ra M\times N$
\begin{equation}
X(x)=(x,f(x)),
\end{equation}
whose image is called the graph of $f$, denoted by $\G(f)$.
Let $g:=g_M+f^* g_N$ be the pull-back metric on $M$, which is indeed the metric of $\G(f)$.
For an orthonormal
frame field $a_1,\cdots,a_m$ on $M$ with respect to $g_M$, we have
\begin{equation}\label{g1}
g=g_{ij}w^i\otimes w^j\qquad \text{with } g_{ij}=g(a_i,a_j)=\de_{ij}+\lan df(a_i),df(a_j)\ran,
\end{equation}
where $w^1,\cdots,w^m$ is the dual cotangent frame field of $a_1,\cdots,a_m$, and $\lan\cdot,\cdot\ran$ denotes the metric on $N$.
Let $dv$ and $dv_M$ be the volume forms with respect to $g$ and $g_M$, then
\begin{equation}
dv=\sqrt{\det(g_{ij})}dv_M.
\end{equation}

Assume 
 $\{f_t:t\in (T_1,T_2)\}$ is a family of maps
from $M$ to $N$, such that $f_0=f$ and for each $t\in (T_1,T_2)$, $f_t=f$ outside a fixed bounded domain $D\subset M$.
Let $g_t=g_{ij}(t)w^i\otimes w^j$ be the metric on $\G(f_t)$ and $dv_t$ be the corresponding volume form.
Now we calculate the first and second variations of the areas of these graphs, as in \cite{L-W2}. Let $V:=\f{df_t}{dt}$ be the variation field, then
\begin{equation}
\f{dg_{ij}}{dt}=\lan \n_{df_t(a_i)}V,df_t(a_j)\ran+\lan \n_{df_t(a_j)}V,df_t(a_i)\ran
\end{equation}
and
\begin{equation}\aligned
\f{d^2g_{ij}}{dt^2}=&2\lan \n_{df_t(a_i)}V, \n_{df_t(a_j)}V\ran+\lan \n_{df_t(a_i)}\n_V V,df_t(a_j)\ran+\lan \n_{df_t(a_j)}\n_V V,df_t(a_i)\ran\\
&+\lan R_{df_t(a_i),V}V,df_t(a_j)\ran+\lan R_{df_t(a_j),V}V,df_t(a_i)\ran.
\endaligned
\end{equation}
Here and in the sequel $\n$ denotes the Levi-Civita connection with respect to $g_N$, and
\begin{equation}
R_{XY}:=-\n_X\n_Y+\n_Y\n_X+\n_{[X,Y]}
\end{equation}
is the corresponding curvature operator.  Denote by $A(t)$ the area of $(D,g_t)$, then
\begin{equation}\label{a1}\aligned
A'(t)&=\f{d}{dt}\int_D dv_t=\int_D \f{d\sqrt{\det(g_{ij})}}{dt}dv_M\\
&=\f{1}{2}\int_D \sqrt{\det(g_{ij})}\sum_{i,j}\left(g^{ij}\f{dg_{ji}}{dt}\right)dv_M\\
&=\f{1}{2}\int_D \sum_{i,j}\left(g^{ij}\f{dg_{ji}}{dt}\right)dv_t
\endaligned
\end{equation}
and
\begin{equation}\label{a2}\aligned
A''(t)=&-\f{1}{2}\int_D \sum_{i,k,l,j} g^{ik}\f{dg_{kl}}{dt}g^{lj}\f{dg_{ji}}{dt}dv_t+\f{1}{2}\int_D \sum_{i,j}g^{ij}\f{d^2 g_{ji}}{dt^2}dv_t\\
&+\f{1}{4}\int_D \left(\sum_{i,j}g^{ij}\f{d g_{ji}}{dt}\right)^2 dv_t
\endaligned
\end{equation}
with $(g^{ij})$ being the inverse matrix of $(g_{ij})$.

Given $x\in M$, $\mu\geq 0$ is called a {\it singular value} of $df:(T_x M,g_M)\ra (T_{f(x)}N, g_N)$ whenever there exists a nonzero
vector $a\in T_x M$ and a vector $b\in T_{f(x)}N$, such that
\begin{equation}
df(a)=\mu b,\quad df^T(b)=\mu a,
\end{equation}
where $df^T:(T_{f(x)}N,g_N)\ra (T_x M,g_M)$ is the transpose of $df$, and $a$ is called a {\it singular vector} of $df$ associated with $\la$.
By the theory of singular value decomposition, $df$ has $m$ singular values, denoted by
\begin{equation}
\la_1\geq \cdots\geq \la_r>0=\la_{r+1}=\cdots=\la_m\quad (r=\text{rank }df).
\end{equation}
Let $a_i$ be the unit singular vector associated with $\la_i$, then $a_1,\cdots,a_m$ forms an orthonormal basis of $(T_x M,g_M)$,
and there exists an orthonormal basis $b_1,\cdots,b_n$ of $(T_{f(x)}N,g_N)$, such that
\begin{equation}\label{sing1}
df(a_i)=\left\{\begin{array}{cc}
\la_i b_i & 1\leq i\leq r;\\
0 & r+1\leq i\leq m.
\end{array}\right.
\end{equation}
As in \cite{L-O-T}, we call
\begin{equation}
\la:=(\la_1,\cdots,\la_m)
\end{equation}
the {\it singular value vector} of $df$ at $x$.

Let
\begin{equation}\label{p}
p_{i\a}:=\lan \n_{df(a_i)}V,b_\a\ran\qquad (1\leq i\leq m,1\leq \a\leq n),
\end{equation}
then substituting (\ref{sing1}) and (\ref{p}) into (\ref{g1}) and (\ref{a2}) implies
\begin{equation}\label{a22}
A''(0)=(I)+(II)+(III)+(IV)+(V),
\end{equation}
where
\begin{equation}\label{I}
(I)=\int_D \left(\sum_{1\leq i\leq r}\f{p_{ii}^2}{(1+\la_i)^2}+\sum_{1\leq i,j\leq r,i\neq j}\f{\la_i\la_j p_{ii}p_{jj}}{(1+\la_i^2)(1+\la_j^2)}\right)dv,
\end{equation}
\begin{equation}\label{II}
(II)=\int_D \sum_{1\leq i<j\leq r} \f{p_{ij}^2+p_{ji}^2-2\la_i\la_j p_{ij}p_{ji}}{(1+\la_i^2)(1+\la_j^2)}dv,
\end{equation}
\begin{equation}\label{III}
(III)=\int_D \sum_{1\leq i\leq r,r+1\leq \a\leq n} \f{p_{i\a}^2}{1+\la_i^2}dv,
\end{equation}
\begin{equation}\label{IV}
(IV)=\int_D \sum_{1\leq i\leq r}\f{\la_i^2}{1+\la_i^2}\lan \n_{b_i}\n_V V,b_i\ran dv
\end{equation}
and
\begin{equation}\label{V}
(V)=-\int_D \sum_{1\leq i\leq r}\f{\la_i^2}{1+\la_i^2}\lan R_{b_i,V}b_i,V\ran dv.
\end{equation}
As shown in \cite{L-T}, $(I)$ is the integration of a quadratic form of $p_{11},\cdots,p_{rr}$, which is non-negative definite if and only if
\begin{equation}\label{cond1}
\la_i\la_j\leq 1\qquad \forall 1\leq i<j\leq m
\end{equation}
and
\begin{equation}\label{cond2}
\prod_{i=1}^m (1-\la_i^2)+\sum_{i=1}^m (1-\la_1^2)\cdots \la_i^2\cdots (1-\la_m^2)\geq 0.
\end{equation}
$(II)\geq 0$ whenever (\ref{cond1}) holds. On the other hand, obviously $(III)\geq 0$, $(IV)$ vanishes whenever $\{f_t\}$
is a geodesic homotopy, and the non-negativity of $(V)$ is a direct corollary of $K_N\leq 0$.

\Section{Confined properties of singular value vectors}{Confined properties of singular value vectors}

Let $x=(x_1,\cdots,x_m),y=(y_1,\cdots,y_m)\in \R^m$ and $1\leq l\leq m$, and we call that $x$ is {\it $l$-majorized} by $y$, denoted by $x\preceq_l y$, whenever
\begin{equation}\label{major1}
\sum_{i=1}^k x_i^\downarrow\leq \sum_{i=1}^k y_i^\downarrow\qquad k=1,\cdots,l,
\end{equation}
where $\{x_i^\downarrow\}$ and $\{y_i^\downarrow\}$ are the rearrangements of $\{x_i\}$ and $\{y_i\}$ in descending order, respectively. Moreover,
we denote $x\asymp_l y$ if the equalities of (\ref{major1}) hold simultaneously for $k=1,\cdots,l$.

Let $f_0,f_1$ be both smooth maps from $D\subset M$ into $\R^n$, and $\{f_t:t\in [0,1]\}$
be the geodesic homotopy of $f_0$ and $f_1$.  Namely, for each $x\in D$,
\begin{equation}
t\mapsto f_t(x)=(1-t)f_0(x)+tf_1(x)
\end{equation}
is the straight line segment from $f_0(x)$
to $f_1(x)$. For any fixed $x$, denote by
\begin{equation}
\la(t):=(\la_1(t),\cdots,\la_m(t))
\end{equation}
the singular value vector of $df_t$ at $x$, which can be seen as a continuous vector-valued function,
then for any closed interval $I\subset [0,1]$, each value
of $\la(t)$ with $t$ lying in the interior of $I$ can be $l$-majorized by the convex combinations of its values taking at the two end points. More precisely:

\begin{pro}\label{p1}

Let
\begin{equation}
\mu(t):=\f{t_2-t}{t_2-t_1}\la(t_1)+\f{t-t_1}{t_2-t_1}\la(t_2),
\end{equation}
be a linear function defined on $[t_1,t_2]\subset [0,1]$,
then
\begin{equation}
\la(t)\preceq_m \mu(t)
\end{equation}
for each $t\in (t_1,t_2)$.
In particular, $\la(t_0)\asymp_l \mu(t_0)$ for a given $t_0\in (t_1,t_2)$ and $1\leq l\leq m$ forces $\la(t)\asymp_l \mu(t)$ for all $t\in (t_1,t_2)$, and moreover
\begin{itemize}
\item $\la_i(t)=\mu_i(t)=\f{t_2-t}{t_2-t_1}\la_i(t_1)+\f{t-t_1}{t_2-t_1}\la_i(t_2)$ for all $t\in (t_1,t_2)$ and $i=1,\cdots,l$;
\item There exist $a_1,\cdots,a_l\in T_x M$ and $b_1,\cdots,b_l\in \R^n$, such that $g_M(a_i,a_j)=\de_{ij}$,
$\lan b_i,b_j\ran=\de_{ij}$ and $df_t(a_i)=\la_i(t)b_i$;
\item $\n_{df_t(a_i)}V=\f{\la_i(t_2)-\la_i(t_1)}{t_2-t_1}b_i$ for each $i=1,\cdots,l$.
\end{itemize}

\end{pro}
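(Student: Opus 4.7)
The plan is to exploit the convexity of the Ky Fan $k$-norms along the affine family $\{df_t\}_{t\in[t_1,t_2]}$ and to analyse the equality case via their variational characterisation together with Courant--Fischer. Since $f_t(x)=(1-t)f_0(x)+tf_1(x)$, the differential $df_t:T_xM\to\R^n$ is affine in $t$. Both $\la(t_1)$ and $\la(t_2)$ are sorted in descending order, so their coordinate-wise convex combination $\mu(t)$ is as well, whence $\mu_k^\downarrow(t)=\mu_k(t)$. The Ky Fan norm $\|A\|_{(k)}:=\sum_{i=1}^k\si_i(A)$ is a bona fide norm on linear maps, so $t\mapsto\|df_t\|_{(k)}=\sum_{i=1}^k\la_i(t)$ is a convex function on $[t_1,t_2]$, hence bounded above by its chord, which is exactly $\sum_{i=1}^k\mu_i(t)$. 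Taking $k=1,\ldots,m$ yields the majorization $\la(t)\preceq_m\mu(t)$.

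Suppose now $\la(t_0)\asymp_l\mu(t_0)$ at some interior $t_0\in(t_1,t_2)$. For each $k\leq l$ the convex function $\sum_{i=1}^k\la_i(t)$ meets its chord at the interior point $t_0$, hence coincides with it throughout $[t_1,t_2]$. Taking successive differences in $k$ gives $\la_k(t)=\mu_k(t)$, linear in $t$, for $k=1,\ldots,l$, which also shows $\la(t)\asymp_l\mu(t)$ on the whole interval and establishes the first bulleted conclusion.

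To produce a common singular frame, pick any $t^*\in(t_1,t_2)$ and choose orthonormal $a_1,\ldots,a_l\in T_xM$ and $b_1,\ldots,b_l\in\R^n$ with $df_{t^*}(a_i)=\la_i(t^*)b_i$, i.e.\ taken from a partial SVD of $df_{t^*}$. The Ky Fan variational bound $\sum_{i=1}^k\lan df_t(a_i),b_i\ran\leq\|df_t\|_{(k)}$ is an equality at $t=t^*$; expanding $df_{t^*}$ as an affine combination of $df_{t_1}$ and $df_{t_2}$ and invoking the just-proved linearity of $\|df_t\|_{(k)}$ forces equality also at $t_1$ and $t_2$, and hence at every $t\in[t_1,t_2]$ by linearity in $t$ of both sides. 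Differencing in $k$ gives the scalar identity $\lan df_t(a_k),b_k\ran=\la_k(t)$. To upgrade to the vectorial identity $df_t(a_k)=\la_k(t)b_k$ I induct on $k$: once the relation is known for $i<k$, the Frobenius identity $\sum_{i=1}^{k-1}|df_t(a_i)|^2=\sum_{i=1}^{k-1}\la_i(t)^2$ identifies $\mathrm{span}(a_1,\ldots,a_{k-1})$ as a top-$(k-1)$ right singular subspace of $df_t$, so by Courant--Fischer $|df_t(a_k)|\leq\la_k(t)$ for $a_k$ orthogonal to it; combining with $\lan df_t(a_k),b_k\ran=\la_k(t)$ and the equality case of Cauchy--Schwarz forces $df_t(a_k)=\la_k(t)b_k$. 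The variation-field identity is then immediate from the flatness of $\R^n$: $\n_{df_t(a_i)}V=(df_1-df_0)(a_i)=\f{df_{t_2}(a_i)-df_{t_1}(a_i)}{t_2-t_1}=\f{\la_i(t_2)-\la_i(t_1)}{t_2-t_1}b_i$. The main difficulty lies in this last paragraph, in the passage from the scalar identities $\lan df_t(a_k),b_k\ran=\la_k(t)$ (which the Ky Fan variational argument alone delivers) to the vectorial identities $df_t(a_k)=\la_k(t)b_k$; it is the Hilbert--Schmidt/Courant--Fischer identification of the partial right singular subspaces that closes the gap.
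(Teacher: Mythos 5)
Your proof is correct and follows essentially the same route as the paper: both reduce the majorization $\la(t)\preceq_m\mu(t)$ to the convexity of the partial sums $S_k(t)=\sum_{i=1}^k\la_i(t)$ along the affine family $df_t$, and both analyse the equality case by pinning down a fixed partial singular frame $(a_i,b_i)$ and comparing $S_k(t)$ with the linear lower bound $F_k(t)=\sum_{i=1}^k\lan df_t(a_i),b_i\ran$. The only difference is one of explicitness: the paper records the equivalence ``$F_1(t)=S_1(t),\dots,F_l(t)=S_l(t)$ iff $df_t(a_i)=\la_i(t)b_i$ for $i\le l$'' as something ``easy to check,'' whereas you actually supply the argument, by induction on $k$ combining the trace-maximization characterization of top singular subspaces, Courant--Fischer to get $|df_t(a_k)|\le\la_k(t)$, and the equality case of Cauchy--Schwarz. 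That detail is a genuine addition in rigor but not a change of method.
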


\begin{proof}

For any given $t_0\in (t_1,t_2)$, let $a_1,\cdots,a_m$ (or $b_1,\cdots,b_n$) be orthonormal basis of $(T_x M,g_M)$ (or $\R^n$), such that
\begin{equation}
df_{t_0}(a_i)=\left\{\begin{array}{cc}
\la_i(t_0) b_i & 1\leq i\leq r;\\
0 & r+1\leq i\leq m.
\end{array}\right.
\end{equation}
Define
\begin{equation}
F_k(t)=\sum_{i=1}^k \lan df_t(a_i),b_i\ran
\end{equation}
and
\begin{equation}
S_k(t)=\sum_{i=1}^k \la_i(t),
\end{equation}
for $k=1,\cdots,m$, then it is easy to check that
\begin{itemize}
\item $F_k$ is a linear function;
\item $F_k(t_0)=S_k(t_0)$;
\item $F_k(t)\leq S_k(t)$;
\item $F_1(t)=S_1(t),\cdots,F_l(t)=S_l(t)$ if and only if
$df_t(a_i)=\la_i(t)b_i$ for all $1\leq i\leq l$.
\end{itemize}
Thus
\begin{equation}\label{sk1}\aligned
S_k(t_0)=F_k(t_0)&= \f{t_2-t_0}{t_2-t_1}F_k(t_1)+\f{t_0-t_1}{t_2-t_1}F_k(t_2)\\
&\leq \f{t_2-t_0}{t_2-t_1}S_k(t_1)+\f{t_0-t_1}{t_2-t_1}S_k(t_2),
\endaligned
\end{equation}
i.e. $\la(t_0)$ is $m$-majorized by $\mu(t_0)$.

The equality of (\ref{sk1}) holds if and only if $F_k(t_1)=S_k(t_1)$ and $F_k(t_2)=S_k(t_2)$, which implies $S_k(t)=F_k(t)$ for all $t\in (t_1,t_2)$,
due to the convexity of $S_k$. If $S_1(t)=F_1(t),\cdots, S_l(t)=F_l(t)$, then $\la_i(t)=S_i(t)-S_{i-1}(t)$ has to be linear,
and hence
\begin{equation}
\n_{df_t(a_i)}V=\n_V df_t(a_i)=\f{d}{dt}(\la_i(t)b_i)=\f{\la_i(t_2)-\la_i(t_1)}{t_2-t_1}b_i.
\end{equation}

\end{proof}

\begin{pro}\label{p2}
Given $[t_1,t_2]\subset [0,1]$ and a symmetric convex set $\mc{C}\subset \overline{\mc{M}}$, if
$\la(t_1)$ and $\la(t_2)$ both lie in $\mc{C}$, then $\la(t)$ lies in $\overline{\mc{M}}$ for each $t\in (t_1,t_2)$. In particular,
if there exists $t_0\in (t_1,t_2)$, such that $\la(t_0)\in \p\mc{M}$ and $\la_1(t_0)>1$, then $\la(t)=\mu(t)$ for each $t\in (t_1,t_2)$.
\end{pro}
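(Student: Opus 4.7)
My plan combines the weak submajorization of Proposition~\ref{p1} with the symmetric convexity of $\mc{C}$ and an appropriate Schur-type closedness property of $\overline{\mc{M}}$.

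For the first assertion, I begin by observing that convexity of $\mc{C}$ gives $\mu(t) \in \mc{C} \subset \overline{\mc{M}}$ for every $t \in [t_1, t_2]$, with non-negative descending components. Proposition~\ref{p1} then yields $\la(t) \preceq_m \mu(t)$. By the Marshall-Olkin characterization of weak submajorization combined with the Hardy-Littlewood-Polya representation of strong majorization, there exists $\nu(t)$ which is a convex combination of permutations of $\mu(t)$ and satisfies $\la(t) \leq \nu(t)$ componentwise. Since $\mc{C}$ is symmetric and convex, it contains all permutations of $\mu(t)$ and their convex combinations, so $\nu(t) \in \mc{C} \subset \overline{\mc{M}}$. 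To transfer this membership to $\la(t)$, I would verify that on the relevant region $\overline{\mc{M}}$ is closed under componentwise decrease of non-negative entries: the pairwise constraints $x_ix_j \leq 1$ are immediate, and for (\ref{cond2}) a direct differentiation, using AM-GM together with $x_jx_k \leq 1$, shows the LHS is non-increasing in each coordinate.

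For the ``in particular'' assertion, by Proposition~\ref{p1} it suffices to establish $\la(t_0) \asymp_m \mu(t_0)$. The crucial extra input is that the symmetric convexity of $\mc{C} \subset \overline{\mc{M}}$ forces $\mu_1(t_0)+\mu_2(t_0) \leq 2$: the midpoint of $\mu(t_0)$ and its $1 \leftrightarrow 2$ transposition lies in $\mc{C} \subset \overline{\mc{M}}$ and has equal first two coordinates $(\mu_1(t_0)+\mu_2(t_0))/2$, whose product must not exceed $1$. The boundary hypothesis then splits into Case (A), where some pairwise product $\la_i(t_0)\la_j(t_0)=1$, and Case (B), where all pairwise products are strictly below $1$ but (\ref{cond2}) vanishes at $\la(t_0)$.

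In Case (A), $\la_1(t_0)>1$ together with the descending order of $\la(t_0)$ forces $\la_1(t_0)\la_2(t_0)=1$, and AM-GM then gives $\la_1(t_0)+\la_2(t_0) = \la_1(t_0)+1/\la_1(t_0) > 2$ strictly; this contradicts $\la_1(t_0)+\la_2(t_0) \leq \mu_1(t_0)+\mu_2(t_0) \leq 2$, so Case (A) cannot occur. In Case (B), the AM-GM-type inequality used in the first assertion becomes strict (equality would force some $x_jx_k=1$, returning us to Case (A)), so (\ref{cond2}) is strictly decreasing in each positive $\la_i$. Combined with $\la(t_0) \leq \nu(t_0)$ and the vanishing of (\ref{cond2}) at both points, this forces $\la(t_0)=\nu(t_0)$, i.e., $\la(t_0) \prec \mu(t_0)$ in the strong majorization order. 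A final rigidity argument, exploiting the strict Schur-convexity of $\sum_i x_i^2$ together with the tight constraint $\mu_1(t_0)+\mu_2(t_0) \leq 2$, is then needed to force this strong majorization to be trivial, giving $\la(t_0) = \mu(t_0)$; applying Proposition~\ref{p1} then yields $\la(t) = \mu(t)$ on $(t_1, t_2)$. I expect the main obstacle to be this last rigidity step in Case (B), where one must rule out non-trivial convex combinations of permutations of $\mu(t_0)$ that could serve as $\la(t_0)$ under the combined boundary and symmetric-convex constraints.
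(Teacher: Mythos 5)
Your proof of the first assertion is essentially correct and close in spirit to the paper's, although the paper sidesteps the "closed under componentwise decrease" claim for $\overline{\mc{M}}$ (which is subtle because $\partial G/\partial x_i$ for $i\geq2$ contains terms of both signs; it is easiest to see via the reformulation $G\geq 0 \Leftrightarrow \f{1}{1-\la_1^2}+H(\la_2,\dots,\la_m)\leq m-1$, where $H(x_2,\dots,x_m)=\sum_{i\geq2}\f{1}{1-x_i^2}$ is manifestly increasing in each coordinate, and the paper deliberately holds $\la_1$ \emph{fixed} in the comparison rather than comparing all coordinates simultaneously). Your Case~(A) exclusion via AM--GM and the bound $\la_1+\la_2\leq \mu_1+\mu_2\leq 2$ is also correct and is exactly the paper's inequality~(\ref{la12}).

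The genuine gap is precisely the ``final rigidity argument'' in Case~(B), which you flag yourself. After establishing $\la(t_0)=\nu(t_0)\prec\mu(t_0)$, you need to conclude $\la(t_0)=\mu(t_0)$, but strict Schur-convexity of $\sum_i x_i^2$ cannot do this: it only gives $\sum\la_i^2\leq\sum\mu_i^2$, and you have no lower bound on $\sum\la_i^2$ to force equality. What would close the gap is the (strict) Schur-convexity of $\Psi(x)=\sum_i\f{1}{1-x_i^2}$ on the relevant region, since $\Psi(\la(t_0))=m-1\geq\Psi(\mu(t_0))$ would then pinch $\la(t_0)=\mu(t_0)$. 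But $\Psi$ is \emph{not} Schur-convex on $\{x_1>1>x_2,\dots\}$ unconditionally: the required inequality $\f{x_1}{(x_1^2-1)^2}\geq\f{x_j}{(1-x_j^2)^2}$ fails in general (e.g.\ $x_1=2,\ x_j=0.4$), and only holds under the additional pairwise-sum constraint $x_1+x_j\leq 2$. The paper avoids this delicate point altogether by never invoking a full $m$-dimensional Schur argument: it freezes the top coordinate at $\la_1(t_0)$, compares the truncated $(m-1)$-vectors via the convex symmetric function $H$ (giving $(\la_2,\dots,\la_m)\leq(w_2,\dots,w_m)$ and $H(w)\leq H(z)$), and then performs a single two-variable monotonicity comparison of $(\la_1,\mu_1+\mu_2-\la_1)$ against $(\mu_1,\mu_2)$ using the derivative estimate for $s\mapsto\f{1}{1-(\mu_1-s)^2}+\f{1}{1-(\mu_2+s)^2}$, which is exactly where the constraint $\mu_1+\mu_2\leq 2$ enters and where the chain of equalities gets pinched to $\la_1(t_0)=\mu_1(t_0)$. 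Your plan could be salvaged by proving Schur-convexity of $\Psi$ under the constraint $\mu_1+\mu_2\leq2$ and its propagation under Robin--Hood transfers, but that requires essentially the same two-coordinate derivative estimate, so the paper's structure --- keeping $\la_1$ fixed and reducing to a final two-variable comparison --- is the cleaner and more direct route.
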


\begin{proof}
Let
\begin{equation}
H(x_2,\cdots,x_m)=\sum_{i=2}^m \f{1}{1-x_i^2}
\end{equation}
be a function defined on $\Om:=\{(x_2,\cdots,x_m)\in \R^{m-1}:0\leq x_i<1\text{ for each }2\leq i\leq m\}$. A straightforward calculation shows:
\begin{itemize}
\item $x_i\leq y_i$ for each $2\leq i\leq m$ implies $H(x_2,\cdots,x_m)\leq H(y_2,\cdots,y_m)$, and the equality holds if and only if
$(x_2,\cdots,x_m)=(y_2,\cdots,y_m)$;
\item $H$ is a strictly convex function on $\Om$.
\end{itemize}
For an arbitrary singular value vector $\la:=(\la_1,\cdots,\la_m)$, $\la\in \overline{\mc{M}}$ if and only one of the following 2 cases occurs:
\begin{itemize}
\item $\la_1\leq 1$;
\item $\la_1>1$, $\la_2<1$, $\la_1\la_2\leq 1$ and $\f{1}{1-\la_1^2}+H(\la_2,\cdots,\la_m)\leq m-1$.
\end{itemize}
Furthermore, $\la\in \p\mc{M}$ is equivalent to the happening of one and only one case of the following:
\begin{itemize}
\item $\la_1=\la_2=1$;
\item $\la_1>1$, $\la_2<1$, $\la_1\la_2=1$ or $\f{1}{1-\la_1^2}+H(\la_2,\cdots,\la_m)=m-1$.
\end{itemize}

To show $\la(t)\in \overline{\mc{M}}$ for each $t\in (t_1,t_2)$, it suffices to consider each given $t\in (t_1,t_2)$ satisfying $\la_1(t)>1$.
Since $\mc{C}$ is convex, $\mu(t)=\f{t_2-t}{t_2-t_1}\la(t_1)+\f{t-t_1}{t_2-t_1}\la(t_2)$ still lies in $\mc{C}$, and Proposition
\ref{p1} gives $\la(t)\preceq_m \mu(t)$. Then $\mu_1(t)\geq \la_1(t)>1$ and $\mu_2(t)<1$. The symmetry and convexity of $\mc{C}$ implies $\nu(t):=(\mu_2(t),\mu_1(t),\mu_3(t),\cdots,\mu_m(t))$ and $\f{1}{2}(\mu(t)+\nu(t))$ are both vectors in $\mc{C}$, hence
\begin{equation}\label{la12}
\max\{\la_1(t)\la_2(t),\mu_1(t)\mu_2(t)\}<\big(\f{1}{2}(\mu_1(t)+\mu_2(t))\big)^2\leq 1.
\end{equation}
Similarly,
\begin{equation}
\hat{\mu}(t):=(\la_1(t),\mu_1(t)+\mu_2(t)-\la_1(t),\mu_3(t),\cdots,\mu_m(t))
\end{equation}
is also a vector in $\mc{C}$, and $\la(t)\preceq_m \mu(t)$ gives
\begin{equation}
\td{\la}(t)\preceq_{m-1} z,
\end{equation}
where $\td{\la}(t):=(\la_2(t),\cdots,\la_m(t))$ and $z=(z_2,\cdots,z_m):=(\mu_1(t)+\mu_2(t)-\la_1(t),\mu_3(t),\cdots,\mu_m(t))$
are the truncations of $\la(t)$ and $\hat{\mu}(t)$, respectively.
Let
\begin{equation}
E:=\{(\de_2 z_{\pi(2)},\cdots, \de_m z_{\pi(m)}):\de_i\in \{0,1\}, \pi\text{ is a permutation of }\{2,\cdots,m\}\},
\end{equation}
then $\td{\la}(t)$ lies in the convex hull of $E$ (see \cite{M}). Namely, there exists $v^{(1)},\cdots,v^{(q)}\in E$
and $\a_1,\cdots,\a_q\in [0,1]$, such that $\a_1+\cdots+\a_q=1$ and $\td{\la}(t)=\a_1v^{(1)}+\cdots+\a_q v^{(q)}$.
Each $v^{(r)}=(\de_2^{(r)} z_{\pi_r(2)},\cdots,\de_m^{(r)} z_{\pi_r(m)})$ ($1\leq r\leq q$) corresponds to $w^{(r)}:=(z_{\pi_r(2)},\cdots,z_{\pi_r(m)})$; let $\td{w}(t):=\a_1 w^{(1)}+\cdots+\a_q w^{(q)}$ and $w(t)=(\la_1(t),\td{w}(t))$, then $w(t)$ is also a vector in $\mc{C}$, satisfying $\la_i(t)\leq w_i(t)$ for each $2\leq i\leq m$. Therefore
\begin{equation}
\f{1}{1-\la_1(t)^2}+H(\la_2(t),\cdots,\la_m(t))\leq \f{1}{1-\la_1(t)^2}+H(w_2,\cdots,w_m)\leq m-1
\end{equation}
and hence $\la(t)\in \overline{\mc{M}}$.

In particular, if $\la(t_0)\in \p \mc{M}$ and $\la_1(t_0)>1$ for a given $t_0\in (t_1,t_2)$, then
\begin{equation}\aligned
m-1&=\f{1}{1-\la_1(t_0)^2}+H(\la_2(t_0),\cdots,\la_m(t_0))\leq \f{1}{1-\la_1(t_0)^2}+H(w_2,\cdots,w_m)\\
&\leq \f{1}{1-\la_1(t_0)^2}+H(z_2,\cdots,z_m)\leq \f{1}{1-\mu_1(t_0)^2}+H(\mu_2(t_0),\cdots,\mu_m(t_0))\leq m-1,
\endaligned
\end{equation}
where the penultimate inequality follows from $\f{d}{ds}\big|_{s=0}\left(\f{1}{1-(\mu_1-s)^2}+\f{1}{1-(\mu_2+s)^2}\right)<0$ whenever $\mu_1>1>\mu_2>0$ and $\mu_1\mu_2<1$. In conjunction with the properties of $H$, we have $(\la_2(t_0),\cdots,\la_m(t_0))=(w_2,\cdots,w_m)
=(z_2,\cdots,z_m)=(\mu_2(t_0),\cdots,\mu_m(t_0))$ and hence $\la(t_0)=\mu(t_0)$. Finally, $\la(t)=\mu(t)$ is the direct corollary of
Proposition \ref{p1}.

\end{proof}

\Section{Proof of the main theorem}{Proof of the main theorem}
	

Now we assume $f_0,f_1$ are both minimal maps from $D\subset M$ into $\R^n$, such that $f_0|_{\p D}=f_1|_{\p D}$, and the singular value
vectors of the graphs of $f_0$ and $f_1$ both lie in $\mc{C}\subset \overline{\mc{M}}$. Let $\{f_t:t\in [0,1]\}$
be the geodesic homotopy of $f_0$ and $f_1$, then by Proposition \ref{p2}, the singular value vector $\la(t)$ of $f_t$ still lies in $\overline{\mc{M}}$ for each $t\in (0,1)$,
which implies $A''(t)\geq 0$. In conjunction with $A'(0)=A'(1)=0$, we get $A''(t)=0$ and hence
\begin{equation}\label{A''}
(I)=(II)=(III)=0\qquad \forall t\in (0,1).
\end{equation}

Let
\begin{equation}\aligned
\La_1:=&\{t\in (0,1):\la(t)\in \mc{M}\}\\
\La_k:=&\{t\in (0,1):\la(t)\in \mc{M}\text{ or }\la_{k}(t)<1\}\qquad k=2,\cdots,m,\\
\La_{m+1}:=&(0,1),
\endaligned
\end{equation}
then $\La_1\subset \La_2\subset\cdots \subset\La_{m+1}$, and all of them are open subsets of $(0,1)$. As shown in \cite{L-O-T},
\begin{equation}\label{equ: claim}
    \nabla_{df_t(v)}V=0
\end{equation}
holds for any $v\in T_x M$ and $t\in\Lambda_1$. Then we shall prove (\ref{equ: claim}) for each $t\in(0,1)$ by induction on $\{\Lambda_k\}_{k=2}^{m+1}$.

Due to the continuity, (\ref{equ: claim}) holds for any $t$ in the closure of $\Lambda_1$. If $\Lambda_2\backslash\overline{\Lambda}_1=\emptyset$, (\ref{equ: claim}) is already true in $\Lambda_2$. Otherwise, let $(t_1,t_2)\subset \La_2\backslash \overline{\La}_1$, then for each $t\in (t_1,t_2)$, we have $\la(t)\in \p \mc{M}$ and $\la_2(t)<1$, which implies $\la_1(t)>1$,
$\la_1(t)\la_2(t)<1$ (by (\ref{la12})) and
\begin{equation}\label{eq1}
\f{1}{1-\la_1(t)^2}+H(\la_2(t),\cdots,\la_m(t))=m-1.
\end{equation}
Furthermore, Proposition \ref{p2} tells us $\la(t)=\mu(t)=\f{t_2-t}{t_2-t_1}\la(t_1)+\f{t-t_1}{t_2-t_1}\la(t_2)$ is a linear function.
By carefully exploring the local properties of the left hand side of (\ref{eq1}), we can derive $\la(t_1)=\la(t_2)$, then Proposition \ref{p1} enable us to get $\n_{df_t(a_i)}V=\f{\la_i(t_2)-\la_i(t_1)}{t_2-t_1}=0$ for each $1\leq i\leq m$. Thereby, (\ref{equ: claim}) holds for $t\in\Lambda_2$. For the induction step, similarly to above, it suffices to consider the open interval $(t_1,t_2)\subset\Lambda_{k+1}\backslash\overline{\Lambda}_k$ with $k=2,...,m$. The definition of $\La_{k+1}$ and $\La_{k}$ implies $\la(t)\in \overline{\mc{M}}$,
 $\la_1(t)=\cdots=\la_{k}(t)=1$ and $\la_{k+1}(t)<1$ (whenever $k\neq m$) for each $t\in (t_1,t_2)$. This means $\la(t)\asymp_{k}\mu(t)$
 and then Proposition \ref{p1} gives $\n_{df_t(a_i)}V=0$ for each $1\leq i\leq k$. In other words, $p_{i\a}=0$ for each $1\leq i\leq k$
 and $1\leq \a\leq n$. In conjunction with (\ref{I}), (\ref{II}) and (\ref{III}), we get
 $p_{ii}=0$ for each $k+1\leq i\leq r$ (since $(I)=0$), $p_{ij}=p_{ji}=0$ for each $k+1\leq i<j\leq r$ (since $(II)=0$) and
 $p_{i\a}=0$ for each $k+1\leq i\leq r$ and $r+1\leq \a\leq n$ (since $(III)=0$). Therefore $\n_{df_t(a_i)}V=0$ for each $1\leq i\leq r$, finishing the induction step.

In conjunction with $V|_{\p D}=0$, we obtain $V\equiv 0$ and thus $f_0=f_1$. This complete the proof of the main theorem.

\bigskip\bigskip

\bibliographystyle{amsplain}

\end{document}